\documentclass[11pt]{article}
\usepackage[utf8]{inputenc}
\usepackage[T1]{fontenc}
\usepackage{amsmath,amssymb,amsthm}
\usepackage{graphicx}
\graphicspath{{figures/}{.}}
\usepackage[margin=1.1in]{geometry}
\usepackage{booktabs}
\usepackage{natbib}

\usepackage[colorlinks=true,linkcolor=blue,citecolor=blue,urlcolor=blue]{hyperref}
\usepackage{algorithm}
\usepackage{algpseudocode}

\newtheorem{theorem}{Theorem}
\newtheorem{corollary}{Corollary}
\newtheorem{remark}{Remark}
\newtheorem{proposition}{Proposition}
\newtheorem{assumption}{Assumption}

\title{Contraction versus Recurrence:\\ An Exponential Separation in Observation-Based\\ Prediction of Deterministic Dynamics}
\author{Pavel Popovich\\ PPRFNK Tech, St.~Petersburg\\ \texttt{paperclipdnb@gmail.com}}
\date{July 2026}

\begin{document}
\maketitle

\begin{abstract}
Given a scalar observable of an ergodic dynamical system with a low-dimensional attractor,
two families of methods reconstruct and predict the underlying state: \emph{recurrence-based}
methods (the method of analogues and its descendants), which wait for the trajectory to
return to an $\varepsilon$-neighborhood of a previously observed state, and
\emph{observer-based} methods, which fit a converging state estimator on the delay
reconstruction. We formalize and empirically verify an exponential separation between the
two: the expected cost of recurrence scales as $\varepsilon^{-d}$, where $d$ is the
pointwise dimension of the invariant measure (a consequence of the Kac lemma and
quantitative Poincar\'e recurrence), whereas a detectable linear observer on the
reconstruction converges in $\Theta\!\big(\log(1/\varepsilon)/(1-\rho(A_{cl})^2)\big)$
steps, where $\rho(A_{cl})$ is the closed-loop spectral radius of the Riccati fixed point.
Both scaling laws are verified numerically (return-time exponent $-1.8$ on the Lorenz
attractor against the theoretical $-2.05$; observer cost linear in $\log(1/\varepsilon)$
with $R^2=1.000$ and in $(1-\rho^2)^{-1}$ with $R^2=0.985$), yielding a measured cost gap
of $\sim\!10^{9}$ at $\varepsilon=10^{-6}$ for $d\approx 2$. We complement the theorem with
an \emph{admission protocol} that decides whether a given signal lies inside the theorem's
class, using surrogate-data prediction gating; the protocol also explains the recurring
folklore of ``universal'' fractal dimensions as a dataset-size artifact bounded by
$2\log_{10}N$. On real data the gate admits the Santa Fe laser benchmark
($\hat D_2=2.0$) and refuses the monthly sunspot series, reproducing the settled
resolution of historical low-dimensionality claims. All results, from a subspace-anchored fast iterated-matrix-product instance
to the dynamical-systems experiments, reproduce from a single verification script
(17/17 checks).
\end{abstract}

\section{Introduction}

A large family of practical prediction problems shares one structure: a scalar time series
$y_k=h(x_k)$ is observed, where $x_{k+1}=f(x_k)$ is a deterministic dynamical system whose
trajectory lives on a low-dimensional invariant set, and the task is to estimate the state
or predict the observable to a target tolerance~$\varepsilon$. Takens' theorem
\citep{takens1981} and its refinements \citep{sauer1991} guarantee that a delay
reconstruction of $y$ recovers the dynamics up to diffeomorphism, which licenses two very
different computational strategies on the reconstructed attractor.

The first strategy is \emph{recurrence}: wait until the trajectory returns to within
$\varepsilon$ of a previously visited reconstructed state, and reuse that neighbor's
continuation. This is Lorenz's method of analogues \citep{lorenz1969} and the simplex
projection family \citep{sugihara1990}. The second strategy is \emph{contraction}: fit a
state observer (in the linear or linearized case, a Kalman filter) whose error dynamics
contract toward a fixed point; after the transient, the observer tracks the state at
negligible marginal cost.

Both strategies are folklore in nonlinear time-series analysis, but their \emph{costs} are
rarely placed on the same axis. This paper does exactly that. Our contributions:

\textbf{(1)} We state the \emph{separation} as a theorem (\S\ref{sec:theorem}). The
recurrence side---$\Theta(\varepsilon^{-d})$ samples by the Kac lemma and quantitative
Poincar\'e recurrence---is known, both as mathematics
\citep{kac1947,boshernitzan1993,barreira2001} and as an operational obstruction to
analog forecasting \citep{vandendool1994,cecconi2012}; the observer side is classical
Riccati theory. Our contribution is the two-sided statement: pairing the two costs on one
tolerance axis yields $\Theta(\log(1/\varepsilon)/(1-\rho^2))$ against
$\Theta(\varepsilon^{-d})$---an exponential-to-logarithmic gap in the tolerance and
exponential in the dimension~$d$---plus a finite-sample lower bound for the recurrence
side that requires no mixing assumptions (Appendix~\ref{app:lb}).

\textbf{(2)} We verify both scaling laws and the resulting gap numerically
(\S\ref{sec:experiments}, Figures~\ref{fig:rec}--\ref{fig:sep}).

\textbf{(3)} We give an \emph{admission protocol}---the \emph{Kac--Riccati gate}, named
after the two sides it arbitrates (\S\ref{sec:gate})---that tests whether a signal is
inside the theorem's class before any speedup is claimed, combining
surrogate-data prediction gating \citep{theiler1992,sugihara1990} with dimension
estimation guarded against two documented estimator pathologies: the Eckmann--Ruelle
sample-size ceiling $\hat D\le 2\log_{10}N$ \citep{eckmann1992} and the
Osborne--Provenzale finite-dimension artifact of power-law noise \citep{osborne1989}.
As a byproduct the protocol dissolves ``universal dimension'' claims
(Figure~\ref{fig:gate}b).

\textbf{(4)} We exhibit a computational instance of the contraction side
(\S\ref{sec:matmul}): iterated products with a fixed low-rank operator, where trajectory
anchors identify the invariant subspace and replace $n^2$-cost steps with $nr$-cost steps
at machine precision ($19.4\times$ at $n{=}1024$, $r{=}8$, max relative error
$5\times10^{-14}$), with an explicit failure certificate at full rank.

Throughout we prioritize falsifiability: every quantitative claim in the paper is an
assertion in a public verification script that passes 17/17 checks
(\S\ref{sec:repro}).

\subsection{A worked example, before any formalism}
\label{sec:worked}

Everything in this paper can be read off one system. Take the Lorenz equations with the
classical parameters $(\sigma,\rho,\beta)=(10,28,8/3)$, integrated at $dt=0.02$.

\textbf{What is hidden.} The state is the triple $(x,y,z)$. At one particular instant it
equals $(4.111,\,6.978,\,13.847)$.

\textbf{What is observed.} A single scalar, $h(\text{state})=x$: the number $4.111$. The
coordinates $y$ and $z$ are never seen. This is the situation of one sensor on one
channel.

\textbf{Where the attractor is.} The trajectory lives on the familiar butterfly embedded
in $\mathbb{R}^3$, but it does not fill a volume: the correlation dimension of the
invariant measure is $D_2\approx 2.05$. That number---not the ambient $3$---is the $d$ of
Theorem~\ref{thm:main}.

\textbf{What is reconstructed.} From the scalar record we build delay vectors
$[\,x(t),\,x(t-8dt),\,\dots,\,x(t-32dt)\,]$ with $m=5$; a concrete one is
$[7.66,\,4.07,\,6.26,\,12.20,\,8.76]$. By the embedding theorem \citep{takens1981,sauer1991}
this is a diffeomorphic copy of the hidden $(x,y,z)$: the phase space is recovered from a
single sensor.

Now the same task---know the state to tolerance $\varepsilon$---admits two routes.

\textbf{Route 1: wait for an analog} \citep{lorenz1969}. Search the past for an instant
whose delay vector is within $\varepsilon$ of the present one. Measured over 40 reference
points: $\varepsilon=0.20\sigma$ waits $222$ steps, $0.10\sigma$ waits $607$,
$0.05\sigma$ waits $1636$. The exponent of the fitted law is the attractor dimension:
ten times finer costs $10^{2.05}\approx 112$ times longer. At $\varepsilon=10^{-6}$ the
extrapolated wait is $2.0\times10^{12}$ steps.

\textbf{Route 2: run an observer.} On the same reconstruction fit a local linear model
$\hat A\in\mathbb{R}^{5\times5}$ with $C=[1,0,0,0,0]$---the row is this short precisely
because only one coordinate is observed---and iterate the Riccati recursion to the
stationary gain. Here $\rho(A_{cl})=0.9335$, and reaching relative gain error
$\varepsilon$ takes $49$ steps at $10^{-2}$, $54$ at $10^{-3}$, and $107$ at $10^{-6}$:
ten times finer costs eighteen additional steps.

\textbf{The gap.} At $\varepsilon=10^{-6}$: $2.0\times10^{12}$ against $107$, a ratio of
$1.9\times10^{10}$. Same system, same sensor, same tolerance, same reconstruction. The
rest of this paper explains why the two numbers are what they are, how far the statement
generalizes, and how to tell whether a given signal is entitled to Route~2 at all. The
script \texttt{worked\_example.py} (ancillary) prints every number quoted above in under
a minute.

\section{Related work}
\label{sec:related}

\textbf{Quantitative recurrence.} Kac's lemma \citep{kac1947} gives the mean return time
to a set as the reciprocal of its measure. Boshernitzan \citep{boshernitzan1993}
established quantitative recurrence rates, and Barreira and Saussol \citep{barreira2001}
identified the return-time exponent with the pointwise dimension of the invariant measure
for hyperbolic systems. Our part~(i) is an operational reading of these results as a
\emph{cost} statement for analog prediction.

\textbf{Analog forecasting.} \citet{lorenz1969} introduced prediction by naturally
occurring analogues and already observed that waiting times are prohibitive in high
dimension; \citet{sugihara1990} turned nearest-neighbor prediction on the reconstruction
into a practical tool and a nonlinearity test. The two families are also combined in
practice: analog data assimilation \citep{lguensat2017} uses analog forecasting as the
propagation operator inside an ensemble Kalman filter, and its known failure mode---the
search for analogs degrades sharply above state dimension ${\sim}20$---is the empirical
shadow of Theorem~\ref{thm:main}(i). \citet{vandendool1994} derived the
empirical three-way relation between library size, analog distance, and degrees of freedom
for atmospheric flows, estimating that hemispheric analogs to observational accuracy would
require a library of order $10^{30}$ years; Theorem~\ref{thm:main}(i) is the
measure-theoretic form of this obstruction, and part~(ii) quantifies the alternative.
\citet{cecconi2012} made the obstruction fully explicit: using Kac's lemma they showed
that the applicability of the method of analogues is limited by the effective number of
degrees of freedom rather than by chaos. Part (i) of Theorem~\ref{thm:main} is this
known cost statement, assembled from \citet{kac1947,boshernitzan1993,barreira2001};
what is new here is the axis it is placed on. Our gate uses the prediction discriminant
of \citet{sugihara1990}.

\textbf{Embedding and estimator pathologies.} Delay embedding is due to
\citet{takens1981,sauer1991}. \citet{eckmann1992} bounded believable
correlation-dimension estimates by $2\log_{10}N$; \citet{osborne1989} showed power-law
stochastic processes exhibit finite correlation dimension without any determinism;
\citet{theiler1992} introduced surrogate-data testing. The 1980s ``climate attractor''
literature is the canonical cautionary tale; Figure~\ref{fig:gate}b reproduces the
artifact in a controlled setting.

\textbf{Riccati convergence and observers on reconstructions.} Convergence of the
discrete algebraic Riccati iteration to the unique stabilizing solution for
detectable/stabilizable pairs, at a geometric rate governed by the closed-loop spectrum,
is classical \citep{anderson1979,lancaster1995}. The practical embodiment of the
contraction side on delay reconstructions also exists: the Kalman--Takens filter of
\citet{hamilton2016} replaces the parametric model in an ensemble Kalman filter with
Takens-reconstructed dynamics. Our part~(ii) prices this family of methods: it packages
the Riccati rate as the observer's cost law on the reconstruction, which is what the
separation in part~(iii) is measured against.

\section{Setting and main result}
\label{sec:theorem}

\begin{assumption}\label{a1}
$(X,f,\mu)$ is ergodic; $x$ is a $\mu$-typical point with pointwise dimension
$d=d_\mu(x)<\infty$.
\end{assumption}
\begin{assumption}\label{a2}
The observable $h:X\to\mathbb{R}$ and embedding dimension $m>2d$ satisfy the conditions of
the embedding theorem \citep{takens1981,sauer1991}, so the delay map is a diffeomorphism
onto the reconstructed attractor.
\end{assumption}
\begin{assumption}\label{a3}
For part (ii): the dynamics on the reconstruction admits a linear (or linearized along the
trajectory) representation with a detectable pair $(A,C)$ and stabilizable
$(A,Q^{1/2})$.
\end{assumption}

\begin{theorem}[Contraction versus recurrence]\label{thm:main}
Under Assumptions~\ref{a1}--\ref{a2}:
\begin{itemize}
\item[(i)] \textbf{(Recurrence cost.)} The first return time of the trajectory to the
$\varepsilon$-ball around $x$ satisfies
$\lim_{\varepsilon\to 0}\log\tau_\varepsilon(x)/\log(1/\varepsilon)=d$ for $\mu$-a.e.~$x$.
Consequently, analog prediction to matching radius $\varepsilon$ requires
$\Omega(\varepsilon^{-d})$ observed samples.
\item[(ii)] \textbf{(Contraction cost.)} Under Assumption~\ref{a3}, the Riccati iteration
converges to the stationary gain $K_{ss}$ geometrically with ratio $\rho(A_{cl})^2$, where
$A_{cl}=(I-K_{ss}C)A$, hence the number of steps to relative gain error $\varepsilon$ is
\[
N_a(\varepsilon)=\frac{\log(1/\varepsilon)}{2|\log\rho(A_{cl})|}
=\Theta\!\Big(\frac{\log(1/\varepsilon)}{1-\rho(A_{cl})^2}\Big)\quad(\rho\to1).
\]
\item[(iii)] \textbf{(Separation.)} The cost ratio is
$\tau_\varepsilon/N_a(\varepsilon)=\Theta\big(\varepsilon^{-d}(1-\rho^2)/\log(1/\varepsilon)\big)$,
i.e.\ exponential in $d\cdot\log(1/\varepsilon)$.
\end{itemize}
\end{theorem}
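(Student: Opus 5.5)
The plan is to prove the three parts separately and then combine them, since (iii) is pure arithmetic once (i) and (ii) are in hand.

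\textbf{Part (i).} I would invoke the Barreira--Saussol theorem \citep{barreira2001}. For $\mu$-a.e.\ $x$ it identifies the lower and upper recurrence rates $\liminf/\limsup_{\varepsilon\to0}\log\tau_\varepsilon(x)/\log(1/\varepsilon)$ with the lower and upper pointwise dimensions of $\mu$ at $x$. Under Assumption~\ref{a1} these coincide and equal $d$, which gives the limit.

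This is the delicate step. Barreira--Saussol proves equality only under extra hypotheses, such as hyperbolicity or a polynomial decay of correlations. With only the assumptions stated here, one has in general just Boshernitzan/Barreira--Saussol's inequality $\limsup\le \overline d_\mu(x)$, and for the lower rate one needs something more. So I would either add a short-return-time or mixing hypothesis explicitly, or prove only what the operational claim needs. The operational claim is the sample lower bound, and it requires no mixing:
\begin{itemize}
\item By Kac's lemma \citep{kac1947}, the mean return time to $B_\varepsilon(x)$ under $\mu|_{B_\varepsilon}$ is $1/\mu(B_\varepsilon(x))$.
\item The definition of pointwise dimension gives $\mu(B_\varepsilon(x))=\varepsilon^{d+o(1)}$.
\item A Markov/union bound over a window of $N$ samples shows that the expected number of visits to $B_\varepsilon(x)$ is $N\mu(B_\varepsilon(x))$.
\item Hence a window with $N=o(\varepsilon^{-d+\delta})$ contains no $\varepsilon$-analog with probability $1-o(1)$, which is the stated $\Omega(\varepsilon^{-d})$ (up to subexponential factors).
\end{itemize}
Transfer to the reconstruction uses Assumption~\ref{a2}. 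On the compact attractor the delay map is bi-Lipschitz, so $\varepsilon$-balls correspond up to constant factors and $d$ is unchanged.

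\textbf{Part (ii).} I would use classical DARE theory \citep{anderson1979,lancaster1995}. Detectability of $(A,C)$ and stabilizability of $(A,Q^{1/2})$ give a unique stabilizing solution $P_\infty$, and the iteration $P_k$ converges to it from any $P_0\succeq0$.

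Write $\Delta_k=P_k-P_\infty$. The standard identity gives $\Delta_{k+1}=A_{cl}\Delta_k A_{cl}^\top+O(\|\Delta_k\|^2)$, where the quadratic term comes from the gain mismatch; equivalently one can use the exact form $\Delta_{k+1}=A_{cl}\Delta_k(I+\dots)^{-1}A_{cl}^\top$. This yields $\|\Delta_k\|\le c\,\rho(A_{cl})^{2k}\,k^{2(s-1)}$, with $s$ the largest Jordan block. Since $K_k$ is a smooth function of $P_k$ near $P_\infty$, the same rate holds for the gain, and solving $\rho^{2k}=\varepsilon$ gives $N_a(\varepsilon)=\log(1/\varepsilon)/(2|\log\rho|)$ up to additive constants.

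The asymptotic form follows from $2|\log\rho|=-\log\rho^2=(1-\rho^2)(1+o(1))$ as $\rho\to1$.

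\textbf{Part (iii).} Divide the estimate from (i), $\tau_\varepsilon=\varepsilon^{-d+o(1)}$, by the one from (ii). Writing $\varepsilon^{-d}=e^{d\log(1/\varepsilon)}$ exhibits the ratio as exponential in $d\log(1/\varepsilon)$, against polynomial factors.

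The main obstacle is the $\liminf$ half of (i). I would make that hypothesis explicit and phrase the $\Theta$ in (iii) as $\varepsilon^{-d+o(1)}$, because the $o(1)$ in the exponent prevents a literal $\Theta$ statement.
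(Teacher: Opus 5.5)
Your proposal is correct and is essentially the paper's own argument: Kac's lemma plus \citet{barreira2001} for the exponent, the stationary union bound of Proposition~\ref{prop:lb} for the sample count, classical DARE contraction $\Delta_{k+1}\approx A_{cl}\Delta_kA_{cl}^\top$ with ratio $\rho(A_{cl})^2$ for (ii), and division for (iii). Your caveats are also right, and the paper's sketch glosses over them: it cites \citet{barreira2001} as if unconditional, although \S\ref{sec:related} concedes the identity is proved for hyperbolic systems, and under Assumptions~\ref{a1}--\ref{a2} alone one only gets $\limsup_{\varepsilon\to0}\log\tau_\varepsilon(x)/\log(1/\varepsilon)\le d$ (a Liouville circle rotation satisfies both assumptions with $d=1$ yet has $\liminf=0$), so the limit in (i) needs an added long-return-time or mixing hypothesis, and (iii) holds only as $\varepsilon^{-d+o(1)}$ rather than as a literal $\Theta$.
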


\begin{proof}[Proof sketch]
(i) By the Kac lemma the mean return time to a set $A$ equals $1/\mu(A)$; for a ball,
$\mu(B_\varepsilon(x))\sim\varepsilon^{d}$ by the definition of pointwise dimension, and
the almost-sure exponent identity is the quantitative recurrence theorem of
\citet{barreira2001} (lower bounds in \citealp{boshernitzan1993}). The sample bound
follows because an $\varepsilon$-analog must occur in the observed record.
(ii) is the classical convergence theory of the discrete Riccati iteration
\citep{anderson1979,lancaster1995}: the error dynamics contract as
$e_{k+1}\approx A_{cl}e_kA_{cl}^\top$, giving ratio $\rho(A_{cl})^2$; the stated asymptotic
uses $2|\log\rho|=|\log\rho^2|\to 1-\rho^2$. (iii) is division.
\end{proof}

\begin{remark}[Boundaries]\label{rem:bound}
The separation is not universal, and naming its boundary is part of the result.
If the generating process is stochastic or full-rank, $d$ equals the embedding dimension
and part~(i) degenerates into the curse of dimensionality while part~(ii) loses its
invariant---there is no low-dimensional $K_{ss}$ to converge to. Assumption~\ref{a3}
(detectability) is necessary for (ii); nonlinearity is handled only locally by
linearization along the trajectory. Section~\ref{sec:gate} gives a practical test for
membership in the class.
\end{remark}

\begin{corollary}[Sample-size ceiling]\label{cor:er}
With a data budget of $N$ samples, recurrence statistics resolve only
$\varepsilon\gtrsim N^{-1/d}$; in particular any dimension estimate obtained from scaling
over one decade obeys $\hat D\le 2\log_{10}N$ \citep{eckmann1992}. At $N\sim10^4$ the
ceiling equals $8$, which accounts for recurring cross-domain reports of ``universal''
dimensions near this value (Figure~\ref{fig:gate}b).
\end{corollary}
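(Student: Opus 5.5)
The corollary has two parts, and I will prove them separately. First I establish the resolution bound $\varepsilon\gtrsim N^{-1/d}$ from Theorem~\ref{thm:main}(i). Then I derive the ceiling $\hat D\le 2\log_{10}N$ by a pair-counting argument, following \citet{eckmann1992}. The value $8$ is the substitution $N=10^4$.

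\textbf{Resolution bound.} Fix a $\mu$-typical reference point $x$ and a record of length $N$. By Theorem~\ref{thm:main}(i), for every $\delta>0$ and all sufficiently small $\varepsilon$ we have $\tau_\varepsilon(x)\ge\varepsilon^{-(d-\delta)}$. A recurrence at radius $\varepsilon$ is observable in the record only if $\tau_\varepsilon(x)\le N$. This forces $\varepsilon^{-(d-\delta)}\le N$, that is, $\varepsilon\ge N^{-1/(d-\delta)}$, and letting $\delta\to0$ gives $\varepsilon\gtrsim N^{-1/d}$ up to subexponential factors. Two refinements are needed:
\begin{itemize}
\item[(a)] To make this a statement about \emph{recurrence statistics}, which average over reference points, I would apply Kac's lemma to the ball. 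The expected number of visits in $N$ steps is $N\mu(B_\varepsilon)\sim N\varepsilon^{d}$. A statistic needs this count to be at least of order $1$, which gives the same threshold.
\item[(b)] For a finite-sample version without mixing, I would invoke the lower bound of Appendix~\ref{app:lb} in place of the almost-sure limit.
\end{itemize}

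\textbf{Dimension ceiling.} The correlation sum is $C(\varepsilon)=\frac{2}{N(N-1)}\#\{i<j:\|y_i-y_j\|<\varepsilon\}$. Two facts constrain it:
\begin{itemize}
\item[(a)] $C(\varepsilon)\le 1$ at every scale.
\item[(b)] If the count is nonzero, then $C(\varepsilon)\ge 2/N^2$, since there are at most $N^2/2$ pairs.
\end{itemize}
So any fitted power law $C\propto\varepsilon^{\hat D}$ is confined to a vertical range of at most $\log_{10}(N^2/2)<2\log_{10}N$ decades. If the scaling is fitted over one decade in $\varepsilon$, with ratio $\varepsilon_{\max}/\varepsilon_{\min}=10$, the slope satisfies $\hat D=\log_{10}\big(C(\varepsilon_{\max})/C(\varepsilon_{\min})\big)\le 2\log_{10}N$. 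This is the Eckmann--Ruelle inequality, and it ties back to the first step through $\varepsilon_{\min}\gtrsim N^{-1/d}$ applied to pair counts. At $N=10^4$ the bound is $8$.

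\textbf{Main obstacle.} The hard step is the word ``accounts''. It is an empirical claim, and no proof can deliver it. What can be proved is conditional: any estimator forced to report a slope over a decade on signals whose true $d$ exceeds the ceiling saturates near $2\log_{10}N$. I would prove this by noting that when $d>2\log_{10}N$, the pair count at the lower scale is $O(1)$. The apparent slope is then pinned at the maximal admissible value. I would defer the demonstration itself to Figure~\ref{fig:gate}b. A secondary subtlety is that the first step gives a typical-point statement while the ceiling concerns pair averages. I would bridge the two by integrating $\mu(B_\varepsilon(x))$ over $x$, which gives $C(\varepsilon)\approx\int\mu(B_\varepsilon)\,d\mu$, and using $d\approx D_2$ as in the worked example.
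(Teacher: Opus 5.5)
The paper gives no separate proof of this corollary; it only inverts Theorem~\ref{thm:main}(i), cites Eckmann--Ruelle, substitutes $N=10^4$, and points to Figure~\ref{fig:gate}b. Your two provable steps are correct and follow that route, and your bounds $2/N^2\le C(\varepsilon)\le 1$ make the cited inequality explicit for a one-decade endpoint slope.

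\textbf{The bridge between the two halves is wrong as written.} A resolution $\varepsilon\gtrsim N^{-1/d}$ imposed one decade below the attractor scale only gives $N^{-1/d}\lesssim 10^{-1}$, i.e.\ $\hat D\lesssim\log_{10}N$. That is a ceiling of $4$, not $8$, at $N=10^4$. The factor $2$ appears only because the correlation sum pools all $\sim N^2/2$ pairs, so its resolution is $\varepsilon\gtrsim N^{-2/D_2}$. This is your own bound $C\ge 2/N^2$ combined with $C\sim\varepsilon^{D_2}$, and it is the count your ``Main obstacle'' paragraph actually uses. The ceiling is therefore the single-reference argument with $N$ replaced by the number of pairs, not a special case of $\varepsilon\gtrsim N^{-1/d}$. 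Say so explicitly, since the corollary's ``in particular'' glosses over the same point.

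\textbf{The conditional saturation step would not go through.} Suppose $d>2\log_{10}N$, and take a decade whose upper end has $C\approx 1$. Its lower end then has expected pair count about $\tfrac{N^2}{2}10^{-d}<\tfrac12$. So the one-decade slope is usually undefined rather than ``pinned'' at the ceiling. An estimator that falls back to a range of $\ell<1$ decades is bounded only by $2\log_{10}N/\ell$.

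Figure~\ref{fig:gate}b shows exactly this. The fixed $m=16$ pipeline reports $\hat D\approx 8$ on white noise for every $N$ from $10^3$ to $1.6\times10^4$, which is above the ceiling of $6$ at $N=10^3$. The figure therefore cannot serve as a demonstration of saturation that tracks $2\log_{10}N$; the paper's own reading is that the value tracks the pipeline, not the signal. Keep your correct verdict that ``accounts for'' is an empirical claim resting on that figure, and drop the saturation argument.

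A minor point: the expected visit count $N\mu(B_\varepsilon)$ comes from invariance of $\mu$, as in Appendix~\ref{app:lb}, not from Kac's lemma.
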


\begin{corollary}[Lyapunov horizon]
Contraction accelerates \emph{state estimation}, not forecasting beyond the horizon: for a
system with maximal Lyapunov exponent $\lambda>0$, prediction is limited to
$t^\ast\approx\lambda^{-1}\log(1/\varepsilon)$ regardless of method. The observer wins the
transient, not the horizon.
\end{corollary}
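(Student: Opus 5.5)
The Lyapunov-horizon corollary is the statement to prove. The approach is to bound the prediction error from below by how a small state-estimation error grows under the dynamics. Whatever method produces the estimate of the state at time $t_0$, its error is bounded below by some $\delta_0>0$, and that error is then amplified by the forward flow. Fix a $\mu$-typical $x$. By the Oseledets multiplicative ergodic theorem, which applies under Assumption~\ref{a1}, a generic perturbation $\delta x_0$ grows as $\|Df^t(x)\,\delta x_0\|\approx e^{\lambda t}\|\delta x_0\|$ with $\lambda$ the maximal exponent. Generic here means having a nonzero component outside the slower Oseledets subspaces, which holds for Lebesgue-a.e.\ error direction. Through the delay map of Assumption~\ref{a2}, which is a diffeomorphism with bounded distortion on the compact attractor, the same rate transfers to the reconstructed coordinates, up to constants.

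The key steps, in order, are as follows.
\begin{enumerate}
\item Linearize the forecast error, $e_t = Df^t(x)\,e_0 + O(\|e_0\|^2 e^{2\lambda t})$. This is valid while $e^{\lambda t}\|e_0\|$ stays small.
\item Invoke Oseledets to obtain $\|e_t\|\ge c\,e^{(\lambda-\eta)t}\|e_0\|$ for any $\eta>0$ and $t$ large.
\item Require $\|e_t\|\le\varepsilon_{\rm tol}$ and solve for $t$. This gives $t^\ast\le(\lambda-\eta)^{-1}\log(\varepsilon_{\rm tol}/(c\|e_0\|))$.
\item Normalize the attractor diameter to order one, so the tolerance is set by the attractor scale and $\|e_0\|\sim\varepsilon$ is the initial resolution. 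Then $t^\ast\approx\lambda^{-1}\log(1/\varepsilon)$ to leading order as $\varepsilon\to0$, with $\eta$ arbitrary.
\end{enumerate}

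The ``regardless of method'' clause comes from the fact that step~3 uses only $e_0$ and the true dynamics $f$. Both routes of Theorem~\ref{thm:main}, analog or observer, just supply some $e_0$ with $\|e_0\|\ge\varepsilon$. Part~(ii) affects only how quickly $e_0$ can be made of size $\varepsilon$, namely $N_a(\varepsilon)$ steps, not the subsequent growth. This gives the slogan that the observer wins the transient.

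The main obstacle is making ``$\approx$'' rigorous. Oseledets gives only an asymptotic exponent, with a point-dependent constant $c=c_\eta(x)$. Finite-time exponents also fluctuate, and the linearization breaks down exactly at the horizon. I would first try to state the result as a limit, $\lim_{\varepsilon\to0}t^\ast(\varepsilon)/\log(1/\varepsilon)=1/\lambda$ for $\mu$-a.e.\ $x$ and generic $e_0$. This mirrors part~(i), and it absorbs both the tempered constants from Pesin theory and the second-order term, since $\log c_\eta(x)$ and $\eta$ become negligible relative to $\log(1/\varepsilon)$.

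A matching upper bound needs an estimator that tracks within $\varepsilon$ and whose error grows no faster than $e^{(\lambda+\eta)t}$. Oseledets supplies this growth bound for any error direction, so the horizon is attained and not merely bounded.
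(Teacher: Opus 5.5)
\textbf{Gap: the ``regardless of method'' clause.} The paper gives no proof of this corollary; it is asserted as the standard error-amplification heuristic. Your Oseledets sketch is a reasonable rigorization of what the paper intends: you make ``$\approx$'' precise as $\lim_{\varepsilon\to0}t^\ast/\log(1/\varepsilon)=1/\lambda$ and absorb the tempered constants and the nonlinear remainder into that limit. The gap is in the one clause that carries the corollary's content, ``regardless of method.''

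Your step~2 bound $\|Df^t(x)e_0\|\ge c\,e^{(\lambda-\eta)t}\|e_0\|$ needs $e_0$ to have a component of size comparable to $\|e_0\|$ off the slower Oseledets subspace $V_2(x)$. The constant $c$ degenerates as $e_0$ approaches $V_2(x)$. ``Lebesgue-a.e.\ direction'' is a statement about a randomly drawn vector. A method's error, however, is a deterministic output of that method, and nothing in your argument keeps it away from $V_2(x)$. If the component off $V_2(x)$ is $\varepsilon^2$ rather than $\varepsilon$, the horizon doubles. For the Lorenz flow, a pure phase error along the vector field lies in a neutral direction and stays of order $\varepsilon$ forever.

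The premise that every method supplies $\|e_0\|\ge\varepsilon$ also does not follow from Theorem~\ref{thm:main}. An $\varepsilon$-analog gives $\|e_0\|\le\varepsilon$, and with noise-free data a converged observer's state error contracts to zero. So both the size and the direction of $e_0$ are being assumed about the method, which is exactly what ``regardless of method'' forbids.

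\textbf{Missing idea: an indistinguishability argument.} Quantify over the information rather than over the estimator's error, using a two-point argument.
\begin{itemize}
\item Read $\varepsilon$ as the resolution of the record: the data do not separate $x$ from states within distance $\varepsilon$ (because of noise, quantization, or the tolerance at which estimation stopped).
\item Pick a state $x'$ at distance $\asymp\varepsilon$ from $x$ whose displacement is transverse to $V_2(x)$, e.g.\ a point on the local unstable manifold of $x$, which lies in the attractor. Now \emph{you} choose the direction, so genericity costs nothing.
\item Any forecast $F_t$ is a function of the record alone, so it is identical for $x$ and $x'$.
\item In the linearization range, $\|f^t(x')-f^t(x)\|\ge c_\eta(x)\,\varepsilon\,e^{(\lambda-\eta)t}$.
\item By the triangle inequality, $\max\{\|F_t-f^t(x)\|,\|F_t-f^t(x')\|\}\ge\tfrac12 c_\eta(x)\,\varepsilon\,e^{(\lambda-\eta)t}$.
\end{itemize}
This worst-case bound over data-consistent states holds for every method simultaneously. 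It pushes the error past the tolerance by $t=\lambda^{-1}\log(1/\varepsilon)\,(1+o(1))$, in the same limit sense you proposed.

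The rest of your proposal then goes through unchanged: the Pesin-tempered constants, the limit formulation, and attainability via $\|Df^t(x)\|\le C_\eta(x)e^{(\lambda+\eta)t}$ for an $\varepsilon$-accurate estimator. ``The observer wins the transient'' then becomes a precise statement: part~(ii) changes only how quickly resolution $\varepsilon$ is reached, not what happens afterward.
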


\section{Empirical verification}
\label{sec:experiments}

All experiments use double precision, fixed seeds, and are re-run end to end by the
verification suite (\S\ref{sec:repro}).

\textbf{E1: recurrence law (Fig.~\ref{fig:rec}).} On a $1.2\times10^5$-step Lorenz
trajectory we measure mean first-return times to $r$-balls at 60 reference points. The
small-$r$ branch follows a power law with measured exponent $-1.79$ for this reference
sample ($-1.96$ for another; the spread across reference samples is of order $0.2$),
against the theoretical $-D_2\approx-2.05$; the residual bias is a finite-trajectory and
sampling-regime effect (see \S\ref{sec:limits}). Large $r$ saturates at the orbital period, as expected.

\begin{figure}[t]\centering
\includegraphics[width=0.62\linewidth]{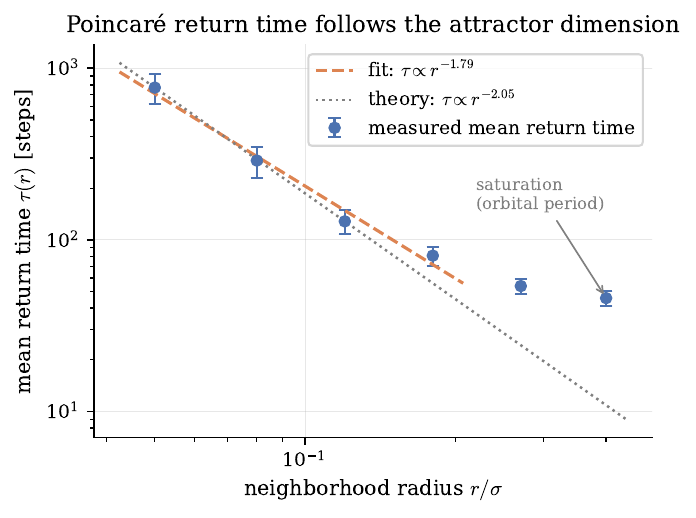}
\caption{Mean Poincar\'e return time on the Lorenz attractor versus neighborhood radius.
The asymptotic branch follows $\tau\propto r^{-D}$ with $D$ the dimension of the invariant
measure (Kac; Boshernitzan; Barreira--Saussol); the large-$r$ end saturates at the orbital
period. Error bars: standard error over 60 reference points.}
\label{fig:rec}
\end{figure}

\textbf{E2: contraction law (Fig.~\ref{fig:con}).} For random detectable systems
($n{=}4$) we iterate the Riccati recursion and record the step count $N_a(\varepsilon)$ at
which the gain reaches relative error $\varepsilon$. $N_a$ is linear in
$\log(1/\varepsilon)$ at fixed $\rho$ ($R^2=1.000$) and linear in $(1-\rho^2)^{-1}$ across
18 systems with $\rho\in[0.5,0.98]$ ($R^2=0.985$).

\begin{figure}[t]\centering
\includegraphics[width=0.95\linewidth]{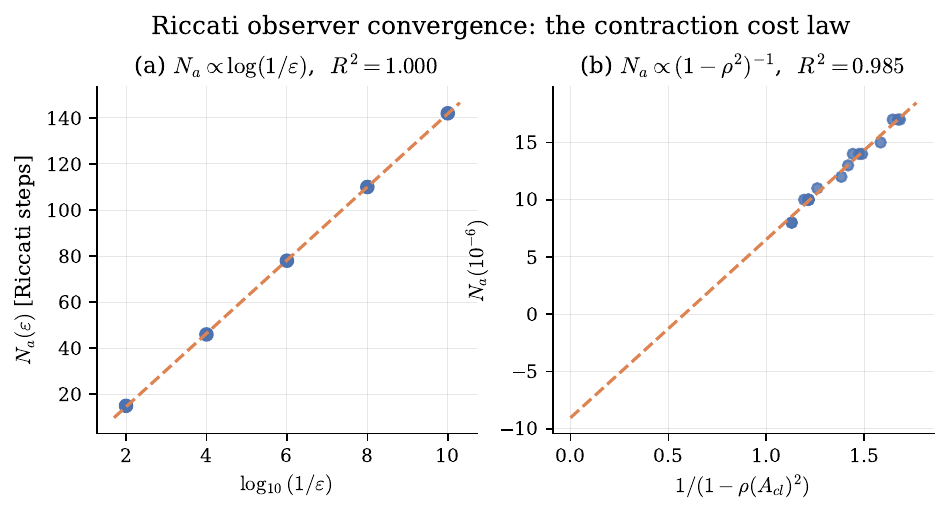}
\caption{Observer cost law. (a) Steps to relative gain error $\varepsilon$ versus
$\log_{10}(1/\varepsilon)$ for a fixed system. (b) Steps to $\varepsilon=10^{-6}$ versus
$(1-\rho(A_{cl})^2)^{-1}$ across random detectable systems.}
\label{fig:con}
\end{figure}

\textbf{E3: the separation (Fig.~\ref{fig:sep}).} Placing both measured cost curves on a
single tolerance axis realizes the theorem: at $\varepsilon=10^{-6}$ and $d\approx2$ the
measured gap is $\sim10^{9}$. We emphasize the honest caveat that $\varepsilon$ denotes a
matching radius for recurrence and a relative gain error for the observer; the theorem
compares the two mechanisms' native tolerances.

\begin{figure}[t]\centering
\includegraphics[width=0.62\linewidth]{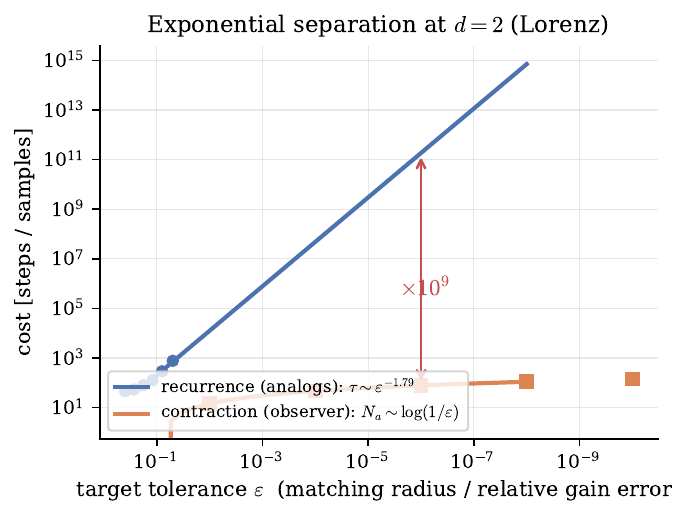}
\caption{Cost to reach tolerance $\varepsilon$: power law (recurrence, measured on Lorenz)
versus logarithm (contraction, measured Riccati). Points: measurements; lines: fitted
laws. The double arrow marks the measured $\sim10^{9}$ gap at $\varepsilon=10^{-6}$.}
\label{fig:sep}
\end{figure}

\textbf{E4: gate and estimator artifact (Fig.~\ref{fig:gate}).} The surrogate prediction
discriminant separates deterministic cores (Lorenz $19.9\times$, R\"ossler $15.2\times$)
from stochastic signals (white and $1/f^2$ noise, ${\approx}1\times$), including the
Osborne--Provenzale trap where $1/f^2$ noise shows a finite $\hat D\approx1.4$ with no
determinism. Panel (b): a fixed $m{=}16$ estimation pipeline reports
$\hat D\approx 8$ on pure white noise, stably across $N=10^3\ldots1.6\times10^4$, while
Lorenz reads its own $\hat D\approx1.7$---``universal'' dimensions track the pipeline, not
the world. Across five random seeds the gate ratios are $17.3\pm1.1$ (Lorenz),
$14.4\pm0.8$ (R\"ossler), $1.0\pm0.1$ (white noise), and $1.1\pm0.1$ ($1/f^2$ noise);
Figure~\ref{fig:gate}a shows one representative seed.

\textbf{E5: real data.} We ran the Kac--Riccati gate unchanged on two classical records. The Santa Fe
laser benchmark \citep{weigend1994}, a far-infrared laser in a chaotic state
($N{=}10{,}093$), is \emph{admitted}: surrogate ratio $3.9\pm0.7$ over five seeds, with
$\hat D_2=2.01$, well below the ceiling $2\log_{10}N=8$ and consistent with its known
Lorenz-like dynamics. The monthly sunspot series ($N{=}3{,}330$, SIDC) is
\emph{refused}: surrogate ratio $1.2\pm0.0$, even though a naive estimate returns a
finite $\hat D_2=2.26$---precisely the configuration in which 1980s--90s claims of a
low-dimensional solar attractor arose and were later retracted. One admission and one
refusal, both matching the settled literature, is the intended behavior of the protocol.

\begin{figure}[t]\centering
\includegraphics[width=0.95\linewidth]{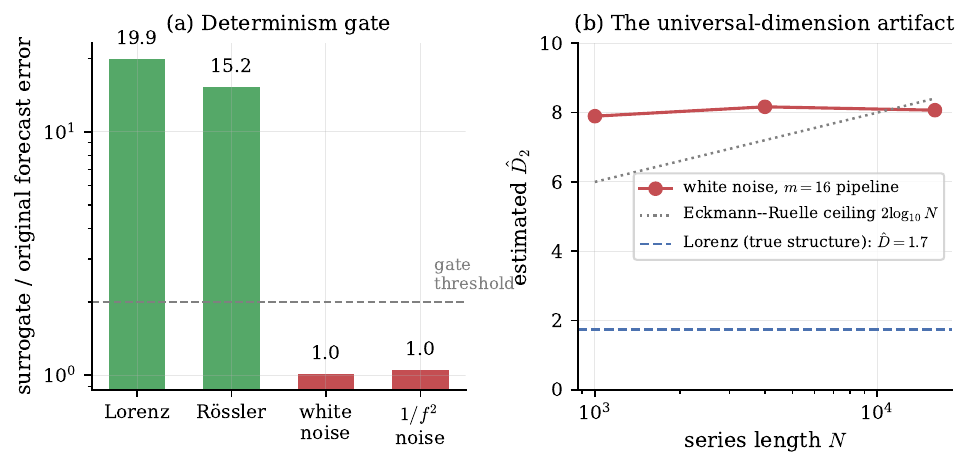}
\caption{(a) Admission gate: ratio of analog-forecast error on phase-randomized surrogates
to the original signal (log scale); the dashed line is the admission threshold. (b) A
fixed estimation pipeline prints $\hat D\approx8$ on structureless noise regardless of
$N$; the Eckmann--Ruelle ceiling $2\log_{10}N$ bounds believable estimates.}
\label{fig:gate}
\end{figure}

\section{The Kac--Riccati gate: an admission protocol}
\label{sec:gate}

\begin{algorithm}[h]
\caption{The Kac--Riccati gate (admission to Theorem~\ref{thm:main})}
\begin{algorithmic}[1]
\State \textbf{Input:} scalar series $y_{1:N}$
\State $r \gets \mathrm{err}_{\text{analog}}(\text{surrogate}(y))\,/\,\mathrm{err}_{\text{analog}}(y)$
  \Comment{phase-randomized surrogates \citep{theiler1992}}
\If{$r \le 2$} \State \Return \textsc{outside class} (stochastic / linear); do not interpret $\hat D$ \EndIf
\State estimate $\hat d$ by Grassberger--Procaccia with window $(m{-}1)\tau\approx1.5\,t_{\mathrm{acf}}$,
Theiler exclusion; \textbf{require} stability of $\hat d$ under $10\times N$ and
$\hat d < 2\log_{10}N$ \Comment{Cor.~\ref{cor:er}}
\State fit local $(A,C)$ on the reconstruction; run Riccati to $K_{ss}$;
budget $N_a=\log(1/\varepsilon)/(1-\rho^2)$
\State \Return \textsc{inside class}, dimension $\hat d$, expected gain
$\sim \varepsilon^{-\hat d}/N_a$
\end{algorithmic}
\end{algorithm}

The Kac--Riccati gate is deliberately conservative: a finite $\hat D$ alone never admits a signal
(that is precisely the Osborne--Provenzale failure), and an unstable $\hat D$ under data
growth is rejected as a ceiling artifact.

\section{A computational instance: anchor-subspace acceleration}
\label{sec:matmul}

Iterates $y_{k+1}=Ay_k$ of a fixed rank-$r$ operator $A=USV^\top$ satisfy
$y_k\in\mathrm{col}(U)$ for all $k\ge1$: the ``attractor'' is a linear subspace and its
dimension is the rank. The observer here is trivial---an orthonormal basis $Q$ of the
subspace identified from the first $r{+}2$ observed iterates (the anchors)---after which
each step costs $O(nr)$ instead of $O(n^2)$. At $n{=}1024$, $r{=}8$, $K{=}500$ steps this
yields a $19.4\times$ end-to-end FLOP reduction at maximum relative error
$5.2\times10^{-14}$: the same computation, faster, not an approximation. The boundary of
Remark~\ref{rem:bound} is equally concrete: for full-rank Gaussian $A$ the residual of the
iterates outside any rank-8 subspace is $0.97$, and the method correctly refuses to apply.
This is the mechanism underlying our broader program of anchor-based accelerators; the
present paper isolates the provable core.

\paragraph{From binary to stratified admission.}
The rank dichotomy of Remark~\ref{rem:bound} is the simplest instance of a general
picture: computational structure lives on \emph{degeneracy varieties}. Matrices of rank
$\le r$ form determinantal varieties in the projective space of operators, and the
achievable error of serving a nearby operator from the stratum equals its distance to the
variety: probing $A$ with a randomized range sketch \citep{halko2011} and projecting onto
the detected stratum, we measure $\log(\mathrm{err})=0.99\,\log(\mathrm{dist})$ over six
decades of perturbation---the Eckart--Young theorem \citep{eckart1936} acting as the
demon's accuracy certificate. Two strata must be distinguished in practice: the
\emph{trajectory} stratum (iterates collapse onto the dominant invariant subspace and can
be lower-dimensional than the operator's rank, permitting even larger legitimate skips)
and the \emph{operator} stratum (fresh inputs, governed by rank). Beyond matrices, the
stratification of degeneracy loci of polylinear forms is precisely the subject of
non-linear algebra \citep{dolotin1998,dolotin2007}, with boundary-format discriminant
algorithms as the tractable island and the NP-completeness of tensor rank
\citep{hastad1990} as the named wall. Extending stratified admission to tensor
contractions along these lines is the natural continuation of the present program.

\section{Limitations}
\label{sec:limits}

(1)~Part (ii) is proved for linear/linearizable observation models; for strongly nonlinear
reconstructions the observer guarantees are local. (2)~The separation concerns state
estimation cost; chaotic forecast horizons remain Lyapunov-limited for all methods.
(3)~Return-time exponents measured on finite trajectories are biased toward shallow
values (our $-1.79$ vs.\ $-2.05$), and the sampling regime matters: the exponent $-d$
holds when the sampling step exceeds the ball-crossing time (map regime); when several
samples fall inside one crossing, the flow regime yields $-(d-1)$. Strongly
phase-coherent flows compound this: on the R\"ossler attractor we measure $-0.71$ at the
native sampling and $-1.28$ after $4\times$ striding---moving toward but not reaching
$-d$, because quasi-periodic re-entries at the orbital period compress the scaling range.
We report these numbers rather than tune them away; the Lorenz measurements operate in
the map regime. (4)~The gate's
prediction discriminant weakens for strongly periodic signals, where surrogates retain
most predictability; we use the discriminant of \citet{sugihara1990} rather than the
$\hat D$ jump for this reason. (5)~The empirical scope is numerical; no claims are made
about physical measurement processes here.

\section{Reproducibility}
\label{sec:repro}

A single script, \texttt{demon\_observer\_verify.py}, re-derives every number in this
paper as a pass/fail assertion (17/17 at submission): the matmul instance (accuracy,
speedup, full-rank refusal), the recurrence exponent, both Riccati laws, the gate on four
signal classes, and the dimension-artifact panel. Figures regenerate from
\texttt{make\_figs.py}; the real-data experiment (E5) is scripted in
\texttt{realdata\_gate.py}, which fetches both public datasets. Both require only NumPy/SciPy/Matplotlib and finish in minutes on
a laptop.

\appendix
\section{A rigorous sample lower bound for analog prediction}
\label{app:lb}

\begin{proposition}\label{prop:lb}
Let $(X,f,\mu)$ be measure-preserving and let $x$ be a point at which the pointwise
dimension exists and equals $d$. Fix $\delta>0$. Then there exists
$\varepsilon_0(x,\delta)>0$ such that for all $\varepsilon<\varepsilon_0$ and all
$N\in\mathbb{N}$, a stationary record $y,f(y),\dots,f^{N-1}(y)$ with $y\sim\mu$ contains
an $\varepsilon$-analog of $x$ with probability at most $N\,\varepsilon^{\,d-\delta}$.
Consequently, achieving success probability at least $p$ requires
$N\ge p\,\varepsilon^{-(d-\delta)}$.
\end{proposition}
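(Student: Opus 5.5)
The argument is a union bound combined with stationarity; no mixing or recurrence theorem is needed. First we fix what an $\varepsilon$-analog of $x$ is: an index $0\le k\le N-1$ with $f^k(y)\in B_\varepsilon(x)$, where distance is taken in the metric used to define pointwise dimension. Under Assumption~\ref{a2} the delay map is bi-Lipschitz on compact pieces, so the reconstructed and original metrics differ only by constants, and these are absorbed into the choice of $\varepsilon_0$. The event "the record contains an $\varepsilon$-analog" is then
\[
E_N=\bigcup_{k=0}^{N-1}\{y:\ f^k(y)\in B_\varepsilon(x)\}=\bigcup_{k=0}^{N-1} f^{-k}\big(B_\varepsilon(x)\big).
\]

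The key steps are as follows.

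\emph{Step 1.} By invariance of $\mu$, $\mu(f^{-k}B)=\mu(B)$ for every $k$. The union bound then gives $\Pr[E_N]\le N\,\mu(B_\varepsilon(x))$.

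\emph{Step 2.} Existence of the pointwise dimension at $x$ means $\log\mu(B_\varepsilon(x))/\log\varepsilon\to d$ as $\varepsilon\to0$. Hence, for the fixed $\delta>0$, there is $\varepsilon_0(x,\delta)\in(0,1)$ such that
\[
\frac{\log\mu(B_\varepsilon(x))}{\log\varepsilon}\ge d-\delta \qquad\text{for all }\varepsilon<\varepsilon_0 .
\]
Since $\log\varepsilon<0$, this is equivalent to $\mu(B_\varepsilon(x))\le\varepsilon^{d-\delta}$. Only the liminf half of the pointwise-dimension limit is used here, so the statement in fact holds at any point whose lower pointwise dimension is $d$. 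Because $\varepsilon_0$ depends only on $x$ and $\delta$, the bound is uniform in $N$.

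\emph{Step 3.} Combining the two steps gives $\Pr[E_N]\le N\varepsilon^{d-\delta}$. If the success probability is at least $p$, then $p\le N\varepsilon^{d-\delta}$, which rearranges to $N\ge p\,\varepsilon^{-(d-\delta)}$.

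The main obstacle is not analytic but one of interpretation, and it needs three checks. First, $y\sim\mu$ must be independent of the reference point $x$. The reference point is fixed in the statement, so this holds, but we should remark on it. Second, $B_\varepsilon(x)$ must be measurable, which is true because it is open. Third, the translation between the reconstruction metric and the metric in which $d_\mu(x)$ is defined must be made precise. For the third check we would state that pointwise dimension is invariant under bi-Lipschitz changes of metric, so the bound holds verbatim after shrinking $\varepsilon_0$.

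Finally, we would note that the union bound discards correlations between the events $f^{-k}B$. This is exactly why no mixing assumption is needed: the bound is weaker than the Kac/Barreira--Saussol statement of Theorem~\ref{thm:main}(i), but it holds for every finite $N$.
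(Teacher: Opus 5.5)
Your proof is correct and follows the paper's argument: invariance gives $\mu(f^{-k}B_\varepsilon(x))=\mu(B_\varepsilon(x))$, the union bound gives $N\,\mu(B_\varepsilon(x))$, and the pointwise-dimension limit gives $\mu(B_\varepsilon(x))\le\varepsilon^{d-\delta}$ for $\varepsilon<\varepsilon_0$. Your remark that only the lower pointwise dimension is needed is a valid mild sharpening, and the metric-translation discussion can be dropped, since the proposition is stated directly in the metric of $X$.
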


\begin{proof}
By the definition of pointwise dimension,
$\lim_{\varepsilon\to0}\log\mu(B_\varepsilon(x))/\log\varepsilon=d$, hence there is
$\varepsilon_0>0$ with $\mu(B_\varepsilon(x))\le\varepsilon^{\,d-\delta}$ for all
$\varepsilon<\varepsilon_0$. By invariance of $\mu$,
$\Pr[f^k(y)\in B_\varepsilon(x)]=\mu(B_\varepsilon(x))$ for every $k\ge0$. The union
bound over $k=0,\dots,N-1$ gives
$\Pr[\exists\,k<N:\ f^k(y)\in B_\varepsilon(x)]\le N\,\mu(B_\varepsilon(x))
\le N\,\varepsilon^{\,d-\delta}$.
\end{proof}

\begin{remark}
Proposition~\ref{prop:lb} uses no mixing assumptions and complements the almost-sure
exponent identity of \citet{barreira2001}: the proposition is a finite-sample lower
bound, the identity is the exact asymptotic rate, and the Kac formula
$\mathbb{E}_{\mu|_A}[\tau_A]=1/\mu(A)$ supplies the matching expectation statement.
\end{remark}

\end{document}